\documentclass{amsart}
\usepackage{amsmath,amscd,amsthm,amsfonts,amssymb}

\pagestyle{plain}

\theoremstyle{plain}
\newtheorem{theorem}                 {Theorem}      [section]
\newtheorem{proposition}  [theorem]  {Proposition}

\newtheorem{lemma}        [theorem]  {Lemma}

\theoremstyle{definition}
\newtheorem{example}      [theorem]  {Example}
\newtheorem{remark}       [theorem]  {Remark}
\newtheorem{definition}   [theorem]  {Definition}

\numberwithin{equation}{section}

\DeclareMathOperator{\ad}{ad}
\DeclareMathOperator{\Ad}{Ad}

\def \theo-intro#1#2 {\vskip .25cm\noindent{\bf Theorem #1\ }{\it #2}}

\def \rn{\mathbb R}
\def \cn{\mathbb C}

\def \F{\mathcal F}
\def \H{\mathcal H}

\def \V{\mathcal V}

\def \ip #1#2{\langle #1,#2 \rangle}

\def \lb#1#2{[#1,#2]}

\def \a{\mathfrak{a}}

\def \g{\mathfrak{g}}

\def \k{\mathfrak{k}}
\def \m{\mathfrak{m}}
\def \n{\mathfrak{n}}

\def \s{\mathfrak{s}}

\def \so#1{\mathfrak{so}(#1)}

\def \nab#1#2{\hbox{$\nabla$\kern -.3em\lower 1.0 ex
    \hbox{$#1$}\kern -.1 em {$#2$}}}

\begin{document}
\baselineskip 22pt \larger

\allowdisplaybreaks

\title{Harmonic morphisms from \\homogeneous Hadamard manifolds}

\author{Sigmundur Gudmundsson}
\author{Jonas Nordstr\" om}

\keywords{harmonic morphisms, minimal submanifolds, Lie groups}

\subjclass[2000]{58E20, 53C43, 53C12}

\address
{Department of Mathematics, Faculty of Science, Lund University,
Box 118, S-221 00 Lund, Sweden}
\email{Sigmundur.Gudmundsson@math.lu.se}
\address
{Department of Mathematics, Faculty of Science, Lund University,
Box 118, S-221 00 Lund, Sweden}
\email{JonasCWNordstrom@gmail.com}

\begin{abstract}
We present a new method for manufacturing complex-valued harmonic
morphisms from a wide class of Riemannian Lie groups. This yields
new solutions from an important family of homogeneous Hadamard
manifolds.  We also give a new method for constructing left-invariant
foliations on a large class of Lie groups producing harmonic morphisms.
\end{abstract}

\maketitle


\section{Introduction}

The notion of a minimal submanifold of a given ambient space is of
great importance in differential geometry. Harmonic morphisms
$\phi:(M,g)\to(N,h)$ between Riemannian manifolds are useful tools
for the construction of such objects. They are solutions to
over-determined non-linear systems of partial differential
equations determined by the geometric data of the manifolds
involved. For this reason harmonic morphisms are difficult to find
and have no general existence theory, not even locally.  On the
contrary there exist $3$-dimensional Lie groups not admitting any
solutions, independent of which left-invariant Riemannian metrics
they are equipped with, see \cite{Gud-Sve-4} and \cite{Gud-Sve-5}.
This makes the existence theory particularly interesting.

If the codomain is a surface the problem is invariant under
conformal changes of the metric on $N^2$.
Therefore, at least for local studies, the codomain can be
taken to be the complex plane with its standard flat metric.
Complex-valued harmonic morphism $\phi:(M,g)\to\cn$ from Riemannian
manifolds generalize holomorphic functions $f:(M,g,J)\to\cn$
from K\" ahler manifolds and posess many of their highly interesting
properties. The theory of harmonic morphisms can therefore be
seen as a generalization of complex analysis.

In this paper we are interested in the existence of complex-valued
harmonic morphisms from Lie groups equipped with left-invariant
Riemannian metrics. We give a new method for manufacturing solutions
on a large family of such spaces. Our following main result is a
wide-going generalization of the recent Theorem 12.1 of \cite{Gud-Sve-4}.

\begin{theorem}
Let $G=N\rtimes A$ be a semi-direct product of the connected and simply
connected Lie groups $A$ and $N$.  Let $G$ be equipped with a left-invariant
metric $g$ and $\g=\a\oplus\k\oplus\m$ be an orthogonal decomposition of the
Lie algebra $\g$ of $G$ such that $\a$ and $\n=\k\oplus\m$ are the Lie algebras
of $A$ and $N$, respectively.  Let $K$ be a closed simply connected subgroup
of $N$ with Lie algebra $\k$ such that
\begin{enumerate}
\item[(i)] $[\a,\k]\subset\k$,
\item[(ii)] $[\a,\m]\subset\m$,
\item[(iii)] $[\k\oplus\m,\k\oplus\m]\subset\k$,
\item[(iv)] $\text{\rm trace }\ad_Z=0$ for all $Z\in\m$,
\item[(v)] there exists a $\lambda\in\a^*$ such that for
  each $H\in\a$ and $Z\in\m$
$$(\ad_H+\ad_H^t)(Z)=2\lambda(H)\cdot Z.$$
\end{enumerate}
Then there exists a harmonic morphism $\Phi:G\to\rn^m$, where $m=\dim\m$.
\end{theorem}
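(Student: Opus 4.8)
The plan is to realize $\Phi$ as the natural fibration whose fibres are the left cosets of the subgroup $Q\le G$ with Lie algebra $\q=\a\oplus\k$, and then to verify the two defining properties of a harmonic morphism, horizontal conformality and harmonicity, separately. First I would record the algebraic consequences of the hypotheses. Conditions (i) and (iii) give that $\q=\a\oplus\k$ is a subalgebra, while (iii) alone shows $[\n,\n]=[\k\oplus\m,\k\oplus\m]\subset\k$, so $\k$ is an ideal of $\n$ with abelian quotient. Hence $K$ is normal in $N$, and since $N$ is simply connected and $K$ is closed and connected, the quotient $N/K$ is a simply connected abelian Lie group of dimension $m$, i.e. $N/K\cong\rn^m$. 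Fixing the identification induced by an orthonormal basis of $\m$, let $\psi\colon N\to N/K\cong\rn^m$ be the quotient homomorphism and define $\Phi\colon G=N\rtimes A\to\rn^m$ by $\Phi(n,a)=\psi(n)$. A short computation shows the fibre over a point is a left coset of $Q$, so the vertical and horizontal distributions $\V,\H$ are the left translations of $\q$ and of $\m$, respectively.

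Next I would establish horizontal conformality. Differentiating $\Phi$ along a left-invariant field $V$ at $(n,a)$, using the semidirect-product product $(n,a)\exp(tV)=(n\,\alpha_a(n_t),aa_t)$ and that $\psi$ is a homomorphism into an abelian group, one finds $d\Phi_{(n,a)}(V)=\overline{\alpha_a}(\overline{V_\m})$, where $V_\m$ is the $\m$-component of $V$, the bar denotes the class in $N/K$, and $\overline{\alpha_a}$ is the linear $A$-action induced on $\rn^m$. In particular $d\Phi$ annihilates $\a\oplus\k$ and its restriction to $\m$ is independent of $n$, so horizontal conformality is equivalent to $\overline{\alpha_a}$ lying in the linear conformal group $\rn^{+}\cdot\O{m}$ for all $a$. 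The infinitesimal generator of this action is $\ad_H|_\m$, which by (ii) preserves $\m$ and by (v) satisfies $\ad_H|_\m=\lambda(H)\,\mathrm{Id}+S_H$ with $S_H$ skew-symmetric; hence $\overline{\alpha_{\exp H}}=e^{\lambda(H)}\exp(S_H)\in\rn^{+}\cdot\SO{m}$, and since such maps form a group the whole connected image of $A$ is conformal. This yields $\langle\,\mathrm{grad}\,\phi_i,\mathrm{grad}\,\phi_j\rangle=\rho\,\delta_{ij}$, with dilation $\rho=\rho(a)$ depending only on the $\a$-coordinate.

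It remains to prove harmonicity, for which I would invoke the Baird--Eells characterization: a horizontally conformal submersion is harmonic provided its fibres are minimal and it is horizontally homothetic, i.e. $\mathrm{grad}^{\H}\ln\rho=0$. The second condition is immediate, since $\rho$ depends only on the vertical $\a$-direction. For minimality of the fibres I would compute, for horizontal $X\in\m$, the mean-curvature identity $\langle\sum_a\nabla_{V_a}V_a,X\rangle=\trace\big(W\mapsto(\ad_X W)^{\V}\big)$ over an orthonormal frame of $\V=\a\oplus\k$. Conditions (ii) and (iii) force $\ad_X\a\subset\m$, $\ad_X\k\subset\k$ and $\ad_X\m\subset\k$, so in the block decomposition only the $\k$-block is diagonal and $\trace(\ad_X|_{\V})=\trace(\ad_X|_{\k})=\trace\ad_X$; by (iv) this vanishes. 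Hence the fibres are minimal and $\Phi$ is a harmonic morphism for every $m\ge 1$ (the factor $2-m$ in the Baird--Eells formula makes the case $m=2$ need only minimality, but both conditions hold uniformly).

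The main obstacle is the conformality step: one must differentiate the non-homomorphic map $\Phi$ through the semidirect-product multiplication correctly, and then recognize that hypothesis (v), the precise requirement that the symmetric part of $\ad_H$ on $\m$ be a scalar, is exactly what promotes the induced $A$-action on $N/K\cong\rn^m$ from merely linear to conformal. Everything else reduces to the two trace identities above together with the standard Baird--Eells tension-field formula; the topological hypotheses ($K$ closed and simply connected, $N$ simply connected) are what guarantee that $\Phi$ is globally defined with $N/K\cong\rn^m$.
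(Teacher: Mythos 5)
Your proposal is correct and follows essentially the same route as the paper's proof: both realize $\Phi$ as the composition $G\to N\to N/K\cong\rn^m$, use condition (v) to show that the induced $A$-action on $\m$ is conformal (so $\Phi$ is horizontally homothetic, the dilation depending only on the $A$-coordinate, which is constant in horizontal directions), establish minimality of the fibres via the trace identity coming from (iii) and (iv), and conclude with the Baird--Eells characterization. The only differences are cosmetic: you identify $N/K$ with $\rn^m$ at the outset and compute the mean curvature by a trace over $\V=\a\oplus\k$ at a general point, whereas the paper endows $N/K$ with the submersed metric $h$ and reduces the fibre computation to the identity coset by left translation.
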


We use our method to produce many new examples of complex-valued
harmonic morphisms from Lie groups.  We show how the construction
can be applied  to an important class of homogeneous
Hadamard manifolds including the well-known Carnot spaces,
see Theorem \ref{theo:Hadamard}.
We conclude this paper with a new recipe for constructing conformal
foliations on Lie groups producing harmonic morphisms, see
Theorem \ref{theo:foliations}.

For the general theory of harmonic morphisms, we refer to
the exhaustive book \cite{Bai-Woo-book} and the on-line
bibliography \cite{Gud-bib} of papers.

\section{Harmonic morphisms}

Let $M$ and $N$ be two manifolds of dimensions $m$ and $n$,
respectively. A Riemannian metric $g$ on $M$ gives rise to the
notion of a {\it Laplacian} on $(M,g)$ and real-valued {\it
harmonic functions} $f:(M,g)\to\rn$. This can be generalized to
the concept of {\it harmonic maps} $\phi:(M,g)\to (N,h)$ between
Riemannian manifolds, which are solutions to a semi-linear system
of partial differential equations, see \cite{Bai-Woo-book}.

\begin{definition}
  A map $\phi:(M,g)\to (N,h)$ between Riemannian manifolds is
  called a {\it harmonic morphism} if, for any harmonic function
  $f:U\to\rn$ defined on an open subset $U$ of $N$ with $\phi^{-1}(U)$
non-empty,
  $f\circ\phi:\phi^{-1}(U)\to\rn$ is a harmonic function.
\end{definition}

The following characterization of harmonic morphisms between
Riemannian manifolds is due to Fuglede and Ishihara.  For the
definition of horizontal (weak) conformality we refer to
\cite{Bai-Woo-book}.

\begin{theorem}\cite{Fug-1,Ish}
  A map $\phi:(M,g)\to (N,h)$ between Riemannian manifolds is a
  harmonic morphism if and only if it is a horizontally (weakly)
  conformal harmonic map.
\end{theorem}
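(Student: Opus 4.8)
The plan is to reduce the defining PDE condition to a pointwise algebraic one by means of the second-order composition formula (the chain rule for the tension field). For a smooth map $\phi:(M,g)\to(N,h)$ and a smooth function $f$ defined near $\phi(x)$ one has
\[
\tau(f\circ\phi)(x)=df_{\phi(x)}\bigl(\tau(\phi)(x)\bigr)+\sum_{i,j}g^{ij}(x)\,\frac{\partial\phi^{\alpha}}{\partial x^{i}}\frac{\partial\phi^{\beta}}{\partial x^{j}}\,(\operatorname{Hess}f)_{\alpha\beta}\bigl(\phi(x)\bigr),
\]
where $\tau(\phi)=\trace_g\nabla d\phi$ is the tension field, $\operatorname{Hess}f=\nabla df$, and we sum over repeated indices $\alpha,\beta$ on $N$. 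Writing $b^{\alpha\beta}(x)=g^{ij}(x)\,\phi^{\alpha}_{i}\phi^{\beta}_{j}$ for the symmetric contravariant tensor through which $\phi$ enters, and recalling that $f$ is harmonic exactly when $\trace_h\operatorname{Hess}f=h^{\alpha\beta}(\operatorname{Hess}f)_{\alpha\beta}=0$, the converse direction is immediate: if $\phi$ is harmonic, so $\tau(\phi)=0$, and horizontally weakly conformal, which in these coordinates reads $b^{\alpha\beta}=\Lambda\,h^{\alpha\beta}$ for some function $\Lambda$, then for every locally harmonic $f$ we get $\tau(f\circ\phi)=\Lambda\cdot h^{\alpha\beta}(\operatorname{Hess}f)_{\alpha\beta}=\Lambda\cdot\Delta_N f=0$, so $f\circ\phi$ is harmonic and $\phi$ is a harmonic morphism.

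For the forward direction I would fix $x\in M$, set $q=\phi(x)$, and exploit the freedom in choosing the test function $f$. The crucial input is a realizability lemma: for every covector $v$ and every $h$-trace-free symmetric $2$-tensor $S$ at $q$ there is a harmonic function $f$ defined near $q$ with $df_q=v$ and $(\operatorname{Hess}f)_q=S$. Granting this, I first take $S=0$ with $v$ arbitrary; the composition formula then gives $v(\tau(\phi)(x))=0$ for all $v$, whence $\tau(\phi)(x)=0$ and $\phi$ is harmonic. With the first term gone, the formula reduces to $b^{\alpha\beta}(x)\,S_{\alpha\beta}=0$ for every $h$-trace-free symmetric $S$. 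Since the $h$-trace-free tensors are precisely the annihilator of $h^{\alpha\beta}$ under the pairing $\langle b,S\rangle=b^{\alpha\beta}S_{\alpha\beta}$, this forces $b^{\alpha\beta}(x)=\Lambda(x)\,h^{\alpha\beta}$ for a scalar $\Lambda(x)$, which is exactly horizontal weak conformality at $x$. As $x$ was arbitrary, $\phi$ is a horizontally weakly conformal harmonic map.

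The main obstacle is the realizability lemma, i.e. the surjectivity of the $2$-jet map from local harmonic functions onto the admissible data $(v,S)$ with $\trace_h S=0$. I would prove it in geodesic normal coordinates centred at $q$, where the metric is Euclidean to first order and the Christoffel symbols vanish at $q$. Taking the quadratic polynomial $P$ carrying the prescribed $1$-jet $v$ and $2$-jet $S$, trace-freeness of $S$ together with the vanishing of the Christoffel symbols gives $\Delta_N P(q)=0$, so $\Delta_N P=O(|y|)$ near $q$. One then corrects $P$ to a genuinely harmonic $f=P+u$ by solving $\Delta_N u=-\Delta_N P$ on a small ball, the delicate point being to arrange, via elliptic (Schauder) estimates and a rescaling of the radius, that the correction $u$ has vanishing $1$- and $2$-jets at $q$ and hence does not disturb the prescribed data. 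This analytic step --- the existence of harmonic functions with prescribed differential and trace-free Hessian at a point --- is the heart of the Fuglede--Ishihara theorem; once it is secured, the algebra and the converse above finish the proof.
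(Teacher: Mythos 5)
The paper offers no proof of this statement: it is quoted from the cited papers of Fuglede and Ishihara, so there is nothing internal to compare against. Your proposal is essentially the classical proof of the Fuglede--Ishihara theorem (the one given in Chapter 4 of the cited book of Baird and Wood): the composition law for the tension field, the immediate converse direction, and the reduction of the forward direction to surjectivity of the $2$-jet map from local harmonic functions at $q$ onto pairs $(v,S)$ with $\trace_h S=0$. Your linear algebra is correct --- pairing $b^{\alpha\beta}(x)$ against all $h$-trace-free symmetric $S$ forces $b=\Lambda\, h$ --- but you should add the one-line remark that $b^{\alpha\beta}=g^{ij}\phi^{\alpha}_i\phi^{\beta}_j$ is positive semi-definite (a Gram-type tensor), so $\Lambda\ge 0$, which is part of the definition of horizontal weak conformality.

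The one step you overstate is the last one in the realizability lemma: solving $\Delta_N u=-\Delta_N P$ on a small ball cannot in general be arranged so that $u$ has \emph{exactly} vanishing $1$- and $2$-jets at $q$; a Dirichlet solution gives you no such exact normalization. What the rescaled Schauder estimate actually yields is smallness: if $u_r$ solves the problem on $B_r(q)$ with zero boundary values and $|\Delta_N P|=O(r)$ there, then $|(\operatorname{Hess}u_r)_q|\le Cr$ and $|du_r(q)|\le Cr^2$, so $f_r=P+u_r$ realizes jets converging to $(v,S)$ as $r\to 0$, not equal to it. This suffices, but one more observation is needed to close the argument: the $2$-jet evaluation is a linear map on the vector space of functions harmonic on a fixed small ball, its image is a linear subspace of the finite-dimensional space $\{(v,S):\trace_h S=0\}$, and a linear subspace that comes arbitrarily close to every point of a finite-dimensional space is the whole space. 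With that density-plus-linearity step (or an iteration of corrections) inserted, your proof is complete and coincides with the standard argument behind the cited theorem.
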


The next result of Baird and Eells gives the theory of
harmonic morphisms a strong geometric flavour and shows that the
case when $n=2$ is particularly interesting. The conditions
characterizing harmonic morphisms are then independent of
conformal changes of the metric on the surface $N^2$.

\begin{theorem}\cite{Bai-Eel}\label{theo:B-E}
Let $\phi:(M^m,g)\to (N^n,h)$ be a horizontally (weakly) conformal
map between Riemannian manifolds. If
\begin{enumerate}
\item[(i)] $n=2$, then $\phi$ is harmonic if and only if $\phi$ has
minimal fibres at regular points;
\item[(ii)] $n\neq 2$, then two of the following conditions imply the other:
\begin{enumerate}
\item $\phi$ is a harmonic map, \item $\phi$ has minimal fibres at regular points,
\item $\phi$ is horizontally homothetic.
\end{enumerate}
\end{enumerate}
\end{theorem}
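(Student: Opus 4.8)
The plan is to reduce everything to a single pointwise identity for the tension field $\tau(\phi)$ of a horizontally (weakly) conformal map and then read off both parts from it. All three conditions are local and are imposed only on the open set of regular points, where $\phi$ is a submersion with nowhere-vanishing dilation $\lambda$; so I would work on that set and choose, around each point, a local orthonormal frame $\{X_1,\dots,X_n,V_1,\dots,V_{m-n}\}$ adapted to the orthogonal splitting $TM=\H\oplus\V$ into horizontal and vertical (i.e.\ fibre-tangent) distributions.

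First I would split the trace defining $\tau(\phi)=\trace_g\nabla d\phi$ into a horizontal and a vertical part. For the vertical part, since $d\phi(V_j)=0$ one has $(\nabla d\phi)(V_j,V_j)=-d\phi(\nabla_{V_j}V_j)$, and summing picks out exactly $-d\phi$ applied to the trace of the second fundamental form of the fibres, i.e.\ to their mean curvature field $\mu^{\F}$ (a horizontal field that vanishes precisely when the fibres are minimal). For the horizontal part I would differentiate the conformality relation $h(d\phi X,d\phi Y)=\lambda^2\,g(X,Y)$ for horizontal $X,Y$; the logarithmic derivative of $\lambda^2$ together with the $n$-fold trace over the $X_i$ produces the factor $(n-2)$ and yields $(n-2)\,d\phi(\gradh{\ln\lambda})$. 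Combining the two contributions gives the fundamental Baird--Eells formula
$$\tau(\phi)=d\phi\big((n-2)\,\gradh{\ln\lambda}-\mu^{\F}\big)$$
on the regular set. Since $\gradh{\ln\lambda}$ and $\mu^{\F}$ are both horizontal and $d\phi$ restricted to $\H$ is a conformal (hence injective) isomorphism wherever $\lambda\neq0$, the vanishing of $\tau(\phi)$ is equivalent to the vanishing of the horizontal field $(n-2)\gradh{\ln\lambda}-\mu^{\F}$.

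Both assertions now follow by elementary algebra in this displayed identity. For (i), when $n=2$ the first term disappears and $\tau(\phi)=-d\phi(\mu^{\F})$, so $\phi$ is harmonic if and only if $\mu^{\F}=0$, i.e.\ the fibres are minimal. For (ii), writing (a) $\tau(\phi)=0$, (b) $\mu^{\F}=0$, and (c) $\gradh{\ln\lambda}=0$ (horizontal homothety), the relation reads $(n-2)\gradh{\ln\lambda}=\mu^{\F}$; since $n\neq2$, any two of (a), (b), (c) force both horizontal fields to vanish and hence give the third. (In the degenerate case $m=n$ the fibres are discrete and minimality is vacuous, consistently with the statement.)

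The main obstacle is the clean derivation of the fundamental formula, and in particular pinning down the exact coefficient $(n-2)$ coming from the horizontal trace: this requires carefully expressing each $(\nabla d\phi)(X_i,X_i)$ through the composition rule for the second fundamental form and the derivative of $\lambda^2$, and correctly isolating the horizontal gradient of $\ln\lambda$ from the terms that cancel against the connection coefficients of the adapted frame. Once that identity is secured, the remainder is the purely linear-algebraic case analysis above.
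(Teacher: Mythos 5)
This theorem is quoted from Baird--Eells \cite{Bai-Eel} and the paper supplies no proof of it, but your argument is exactly the standard one from that source (see also Chapter 4 of \cite{Bai-Woo-book}): establish the fundamental equation $\tau(\phi)=d\phi\bigl((2-n)\,\mathrm{grad}_{\H}(\ln\lambda)-\mu^{\F}\bigr)$ on the regular set, use injectivity of $d\phi|_{\H}$ there, and read off (i) and (ii) by linear algebra, which is correct as you present it. Two small blemishes: the horizontal trace actually produces the coefficient $(2-n)$, not $(n-2)$ (as your own caveat about pinning down the coefficient anticipates; this is immaterial for the two-out-of-three logic, since either sign forces both horizontal fields to vanish when any two of (a),(b),(c) hold), and to conclude harmonicity on all of $M$ rather than just the regular set one should add the routine observation that $\tau(\phi)$ vanishes on the interior of the critical set, where $\phi$ is locally constant, and hence everywhere by continuity and density.
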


We are interested in complex-valued functions
$\phi,\psi:(M,g)\to\cn$ from Riemannian manifolds. In that
situation the metric $g$ induces the complex-valued Laplacian
$\tau(\phi)$ and the gradient $\text{grad}(\phi)$ with values in
the complexified tangent bundle $T^{\cn}M$ of $M$.  We extend the
metric $g$ to be complex bilinear on $T^{\cn} M$ and define the
symmetric bilinear operator $\kappa$ by
$$\kappa(\phi,\psi)= g(\text{grad}(\phi),\text{grad}(\psi)).$$ Two
maps $\phi,\psi: M\to\cn$ are said to be {\it orthogonal} if
$$\kappa(\phi,\psi)=0.$$  The harmonicity and horizontal
conformality of $\phi:(M,g)\to\cn$ are expressed by the relations
$$\tau(\phi)=0\ \ \text{and}\ \ \kappa(\phi,\phi)=0.$$

\begin{definition}\cite{Gud-8}
Let $(M,g)$ be a Riemannian manifold.  Then a set
$$\Omega=\{\phi_k:M\to\cn\ |\ k\in I\}$$ of
complex-valued functions is said to be an {\it orthogonal harmonic
family} on $M$ if, for all $\phi,\psi\in\Omega$,
$$\tau(\phi)=0\ \ \text{and}\ \ \kappa(\phi,\psi)=0.$$
\end{definition}

The next result shows that orthogonal harmonic families can be useful
for producing a variety of harmonic morphisms.

\begin{theorem}\cite{Gud-Sve-1}\label{theo:local-sol}
Let $(M,g)$ be a Riemannian manifold and
$$\Omega=\{\phi_k:M\to\cn\ |\ k=1,\dots ,n\}$$ be a finite orthogonal
harmonic family on $(M,g)$.  Let $\Phi:M\to\cn^n$ be the map given
by $\Phi=(\phi_1,\dots,\phi_n)$ and $U$ be an open subset of
$\cn^n$ containing the image $\Phi(M)$ of $\Phi$.
If $\H=\{h_i:U\to\cn\ |\ i\in\cn\}$
is a family of holomorphic functions then the family
$\F$ given by
$$\F=\{\psi:M\to\cn\ |\ \psi=h(\phi_1,\dots ,\phi_n),\ h\in\H\}$$
is an orthogonal harmonic family on $M$.  In particular, every element
of $\F$ is a harmonic morphism.
\end{theorem}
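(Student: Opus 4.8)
The plan is to reduce the whole statement to two chain-rule identities. Indeed, an element $\psi=h(\phi_1,\dots,\phi_n)$ of $\F$ is a harmonic morphism precisely when $\tau(\psi)=0$ and $\kappa(\psi,\psi)=0$, while $\F$ is an orthogonal harmonic family precisely when $\tau(\psi)=0$ and $\kappa(\psi,\chi)=0$ for all $\psi=h\circ\Phi$ and $\chi=k\circ\Phi$ with $h,k\in\H$. It therefore suffices to express the gradient and the complex Laplacian of such a composition through those of the components $\phi_j$ and then invoke the family hypotheses $\tau(\phi_j)=0$ and $\kappa(\phi_j,\phi_l)=0$.

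First I would compute the gradient. Writing $h=h(z_1,\dots,z_n)$ and differentiating $\psi=h\circ\Phi$ along an arbitrary real tangent vector $X$, the chain rule gives
$$X(\psi)=\sum_{j=1}^n\frac{\partial h}{\partial z_j}(\Phi)\cdot X(\phi_j)+\sum_{j=1}^n\frac{\partial h}{\partial\bar z_j}(\Phi)\cdot X(\bar\phi_j),$$
and here the holomorphicity of $h$ annihilates the second sum, so that $\text{grad}(\psi)=\sum_j(\partial h/\partial z_j)\,\text{grad}(\phi_j)$. Taking the complex-bilinear product of two such gradients then yields
$$\kappa(\psi,\chi)=\sum_{j,l=1}^n\frac{\partial h}{\partial z_j}\frac{\partial k}{\partial z_l}\,\kappa(\phi_j,\phi_l),$$
which vanishes by assumption, settling the horizontal conformality condition.

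Next I would treat the Laplacian, which is the substantive step. Since $\tau(\psi)=\text{div}\,\text{grad}(\psi)$, I would apply the product rule $\text{div}(fV)=f\,\text{div}\,V+g(\text{grad}\,f,V)$ to the gradient found above. This produces a first contribution $\sum_j(\partial h/\partial z_j)\,\tau(\phi_j)$ together with terms $g(\text{grad}(\partial h/\partial z_j),\text{grad}(\phi_j))$; expanding the latter gradient by the chain rule, and once more discarding the $\bar\partial$-contributions by holomorphicity, gives the composition formula
$$\tau(\psi)=\sum_{j=1}^n\frac{\partial h}{\partial z_j}\,\tau(\phi_j)+\sum_{j,l=1}^n\frac{\partial^2 h}{\partial z_j\partial z_l}\,\kappa(\phi_j,\phi_l).$$
Both sums vanish under the hypotheses, so $\tau(\psi)=0$. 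Combined with the previous paragraph this shows that $\F$ is an orthogonal harmonic family, and the special case $\chi=\psi$ furnishes $\kappa(\psi,\psi)=0$; hence each $\psi\in\F$ is a horizontally conformal harmonic map and, by the theorem of Fuglede and Ishihara, a harmonic morphism.

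The main obstacle is not computational difficulty but the bookkeeping guaranteeing that every anti-holomorphic term truly disappears: a general smooth $h$ would contribute derivatives $\partial h/\partial\bar z_j$ and cross terms such as $\kappa(\phi_j,\bar\phi_l)$, none of which are controlled by the orthogonal harmonic family conditions. It is exactly the holomorphicity of the members of $\H$ that collapses both chain-rule identities to sums involving only $\tau(\phi_j)$ and $\kappa(\phi_j,\phi_l)$, so I would make sure to justify the absence of these $\bar\partial$-terms, working if necessary in a fixed local chart on $M$, before appealing to the hypotheses.
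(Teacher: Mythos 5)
Your proof is correct: the two chain-rule identities you derive (for $\text{grad}(h\circ\Phi)$ and $\tau(h\circ\Phi)$, with all $\bar\partial$-terms killed by holomorphicity of $h$ and of its first derivatives) reduce everything to the hypotheses $\tau(\phi_j)=0$ and $\kappa(\phi_j,\phi_l)=0$, and the final appeal to the Fuglede--Ishihara characterization is exactly what is needed. Note that this paper states the theorem as a quoted result from \cite{Gud-Sve-1} and gives no proof of it here, so there is nothing internal to compare against; your argument is the standard composition argument and correctly supplies the omitted proof.
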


The problem of finding an orthogonal harmonic family on a Riemannian
manifold can often be reduced to finding a harmonic morphism with values
in $\rn^n$.

\begin{proposition}\cite{Gud-Sve-4}
Let $\Phi:(M,g)\to \rn^n$ be a harmonic morphism
from a Riemannian manifold to the standard Euclidean
$\rn^n$ with $n\ge 2$.  If $V$ is an isotropic subspace of $\cn^n$ then
$$\Omega_V=\{\phi_v(x)=(\Phi(x),v)|\ v\in V\}$$ is an orthogonal harmonic
family of complex-valued functions on $(M,g)$.
\end{proposition}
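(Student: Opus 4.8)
The plan is to reduce everything to the real component functions of $\Phi$ and then exploit the defining identities of a harmonic morphism into flat Euclidean space. Write $\Phi=(\Phi_1,\dots,\Phi_n)$ with each $\Phi_j:M\to\rn$. By the Fuglede--Ishihara characterization, $\Phi$ being a harmonic morphism to the flat $\rn^n$ is equivalent to $\Phi$ being a horizontally conformal harmonic map. Since the codomain is flat, harmonicity of the map amounts to $\tau(\Phi_j)=0$ for each $j$, while horizontal conformality furnishes a dilation function $\lambda$ with
$$g(\text{grad}(\Phi_i),\text{grad}(\Phi_j))=\lambda^2\,\delta_{ij}.$$
These two identities are the only facts about $\Phi$ that I intend to use.

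For $v=(v_1,\dots,v_n)\in V$ the associated function is the complex linear combination $\phi_v=(\Phi,v)=\sum_j v_j\Phi_j$ with constant coefficients. First I would verify harmonicity: linearity of the Laplacian together with $\tau(\Phi_j)=0$ gives $\tau(\phi_v)=\sum_j v_j\,\tau(\Phi_j)=0$, so every member of $\Omega_V$ is harmonic.

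Next I would compute $\kappa$ on an arbitrary pair $\phi_v,\phi_w$. Since the gradient is linear and $g$ has been extended complex-bilinearly to $T^{\cn}M$, the horizontal conformality identity yields
$$\kappa(\phi_v,\phi_w)=\sum_{i,j}v_i w_j\,g(\text{grad}(\Phi_i),\text{grad}(\Phi_j))=\lambda^2\sum_j v_j w_j=\lambda^2\,(v,w),$$
where $(v,w)$ is the standard complex bilinear form on $\cn^n$. The decisive step is then to invoke the hypothesis that $V$ is isotropic: by definition $(v,w)=0$ for all $v,w\in V$, whence $\kappa(\phi_v,\phi_w)=0$. Combined with the harmonicity already established, this is exactly the assertion that $\Omega_V$ is an orthogonal harmonic family.

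The argument is essentially formal once the two structural identities are recorded, so I do not anticipate a serious obstacle. The one point deserving care is the interface between horizontal conformality---a statement about the real gradient fields $\text{grad}(\Phi_j)$---and the complex-bilinear extension of $g$ that defines $\kappa$; making the passage through $g(\text{grad}(\Phi_i),\text{grad}(\Phi_j))=\lambda^2\delta_{ij}$ legitimate inside the bilinear form $\kappa$ is where I would be most careful, but it is purely bookkeeping and the factor $\lambda^2$, though a function of the point, is harmless since it multiplies a vanishing coefficient.
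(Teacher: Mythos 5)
Your proof is correct and complete: componentwise harmonicity plus the horizontal (weak) conformality identity $g(\mathrm{grad}\,\Phi_i,\mathrm{grad}\,\Phi_j)=\lambda^2\delta_{ij}$, followed by bilinearity of $\kappa$ and isotropy of $V$, is exactly the standard argument, and it handles the critical points correctly since there $\lambda=0$ and the identity still holds. The paper itself gives no proof (the proposition is quoted from \cite{Gud-Sve-4}), but your argument coincides with the one in that reference, so there is nothing to add.
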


Here $(\cdot,\cdot )$ refers to the standard symmetric bilinear form
on the complex linear space $\cn^n$.

\section{Harmonic morphisms from Lie groups}

In this section we introduce a new method for manufacturing complex-valued
harmonic morphisms from a large class of Lie groups.  This is a wide-going
generalization of a construction recently presented in \cite{Gud-Sve-4}.

\begin{theorem}\label{theo:morphisms}
Let $G=N\rtimes A$ be a semi-direct product of the connected and simply
connected Lie groups $A$ and $N$.  Let $G$ be equipped with a left-invariant
metric $g$ and $\g=\a\oplus\k\oplus\m$ be an orthogonal decomposition of the
Lie algebra $\g$ of $G$ such that $\a$ and $\n=\k\oplus\m$ are the Lie algebras
of $A$ and $N$, respectively.  Let $K$ be a closed simply connected subgroup
of $N$ with Lie algebra $\k$ such that
\begin{enumerate}
\item[(i)] $[\a,\k]\subset\k$,
\item[(ii)] $[\a,\m]\subset\m$,
\item[(iii)] $[\k\oplus\m,\k\oplus\m]\subset\k$,
\item[(iv)] $\text{\rm trace }\ad_Z=0$ for all $Z\in\m$,
\item[(v)] there exists a $\lambda\in\a^*$ such that for
  each $H\in\a$ and $Z\in\m$
$$(\ad_H+\ad_H^t)(Z)=2\lambda(H)\cdot Z.$$
\end{enumerate}
Then there exists a harmonic morphism $\Phi:G\to\rn^m$, where $m=\dim\m$.
\end{theorem}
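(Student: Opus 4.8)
The plan is to construct an explicit map $\Phi:G\to\rn^m$ whose components are harmonic morphisms, by building it from left-invariant data on the $\m$-factor. Since $K$ is closed and simply connected with $[\k\oplus\m,\k\oplus\m]\subset\k$ (condition (iii)), the subgroup $K$ is normal in $N$ and the quotient $N/K$ is an abelian Lie group, hence isomorphic to $\rn^m$. I would therefore define $\Phi$ essentially as the composition $G\to G/K\to\rn^m$, using the horizontal (i.e. $\m\oplus\a$) directions to coordinatize the quotient. Concretely, I expect to exhibit $m$ complex- or real-valued functions $\phi_1,\dots,\phi_m$ by integrating the left-invariant $1$-forms dual to an orthonormal basis of $\m$, twisted by the weight $\lambda$ coming from condition (v); the factor $e^{-\lambda(H)}$ or similar exponential arising from the $A$-action will be needed to make the $1$-forms closed and the resulting functions well-defined on the simply connected group.

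The key steps, in order, are as follows. First I would fix an orthonormal basis $\{Z_1,\dots,Z_m\}$ of $\m$ and an orthonormal basis of $\a$, and compute the Levi-Civita connection of $g$ via the Koszul formula using the bracket relations (i)--(iii); conditions (i) and (ii) guarantee that $\ad_H$ preserves the splitting $\k\oplus\m$ for $H\in\a$, so the connection respects the decomposition in the way needed. Second, I would verify horizontal conformality $\kappa(\phi_i,\phi_j)=\delta_{ij}\rho$ for a common conformality factor $\rho$: this is where condition (v) enters decisively, since the symmetric part $\ad_H+\ad_H^t$ acting as the scalar $2\lambda(H)$ on all of $\m$ forces the gradients of the components to stay mutually orthogonal and of equal length as one moves in the $\a$-direction. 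Third, I would check harmonicity $\tau(\phi_i)=0$ for each component; here condition (iv), $\trace\ad_Z=0$ for $Z\in\m$, ensures the mean-curvature contribution from the $\m$-directions vanishes, and the trace of $\ad_H$ on $\a\oplus\k\oplus\m$ together with (v) controls the remaining divergence term.

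The main obstacle I anticipate is the harmonicity computation, specifically showing that the tension field vanishes despite the non-unimodularity that a generic solvable group $G$ may exhibit. The Laplacian of a left-invariant-type function picks up a first-order term governed by the mean curvature vector of the orbit foliation, equivalently by traces of various $\ad$-operators; balancing the contribution of the $\a$-action (through $\lambda$, by (v)) against the trace conditions (iv) requires care, because the weight $\lambda(H)$ that rescales the horizontal metric also appears in the conformality factor and must cancel correctly in $\tau(\phi_i)=0$. I expect the clean way to organize this is to invoke Theorem \ref{theo:B-E}(i)--(ii): once horizontal conformality is established, it suffices to check either harmonicity or minimality of fibres plus horizontal homotheticity, and the fibres are precisely the cosets of $K$, whose minimality should follow directly from (iii) and (iv). Having produced the harmonic morphism $\Phi:G\to\rn^m$, Proposition 2.7 then upgrades it to an orthogonal harmonic family and hence to genuinely complex-valued harmonic morphisms, completing the argument.
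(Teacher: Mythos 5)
Your overall strategy is the one the paper itself uses: pass to the abelian quotient $N/K\cong\rn^m$, get horizontal conformality from condition (v), get minimality of fibres from (iv), and conclude via the Baird--Eells theorem; your ``twisted coordinate functions'' are just the paper's projection $na\mapsto nK$ written out in coordinates. However, there is one genuine error in your plan: the fibres of $\Phi:G\to\rn^m$ are \emph{not} the cosets of $K$. Since $\Phi$ collapses the whole $A$-factor as well, the fibre through $na$ is the coset $n(KA)$ of the subgroup whose Lie algebra is $\a\oplus\k$ (a subalgebra by (i) and (iii)); it has dimension $\dim\a+\dim\k$. Worse, the $K$-cosets themselves are in general \emph{not} minimal in $G$: the $\a$-component of their mean curvature is $\sum_i\ip{[H,X_i]}{X_i}=\trace(\ad_H|_{\k})$, which nothing in (i)--(v) forces to vanish --- for instance in the case $\dim(\a,\k,\m)=(1,1,2)$ with $[A,X]=\lambda X$, $\lambda\neq 0$, the $K$-cosets have mean curvature $\lambda$ in the $A$-direction. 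So the step ``minimality of the fibres follows from (iii) and (iv) applied to the cosets of $K$'' fails as stated.

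The repair is exactly the paper's computation, and it works precisely because the true fibre $KA$ has normal directions only in $\m$: for $Z\in\m$ and orthonormal bases $\{H_j\}$ of $\a$ and $\{X_i\}$ of $\k$, the Koszul formula gives
$$\ip{\mu^{\V}}{Z}=\sum_j\ip{[Z,H_j]}{H_j}+\sum_i\ip{[Z,X_i]}{X_i}
=\trace\ad_Z=0,$$
where the identification with the full trace uses $[Z,\a]\subset\m\perp\a$ from (ii) and $[Z,\m]\subset\k\perp\m$ from (iii), and the vanishing is (iv). Two further points need sharpening. First, since $m$ may exceed $2$, horizontal conformality plus minimal fibres is not enough; you must establish horizontal \emph{homothety}, which you list but do not substantiate. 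The clean argument is that the dilation of $d\Phi$ at $na$ equals the dilation $\mu(a)$ of the conformal map $\Ad_a|_{\m}$ (conformal exactly because (v) puts $\ad_H|_{\m}$ in $\rn\cdot I\oplus\so\m$), and $\mu(a)$ is constant along horizontal curves because moving in an $\m$-translated direction leaves the $A$-component of the point unchanged (as $\Ad_a(\m)\subset\n$); this replaces your vaguer ``cancellation'' remark. Second, a purely scalar twist $e^{-\lambda(H)}$ will not close your $1$-forms when $\ad_H$ has a nontrivial skew part; one must twist by the full conformal factor $\Ad_a^{-1}|_{\m}$ (rotation times dilation), which again yields precisely the quotient map above.
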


The following proof of Theorem \ref{theo:morphisms} is an
amended version of the proof of the special case presented in
Theorem 12.1 of \cite{Gud-Sve-4}.

\begin{proof}
The subalgebra $\k$ is an ideal of both $\g$ and $\n$, so $K$ is a
normal subgroup of $G$ and $N$.  The group $N/K$
is connected and Abelian since $[\n,\n]\subset\k$.
Both $N$ and $K$ are simply connected so $N/K$ is simply connected
and hence isomorphic to $\rn^{m}$.

Equip $N$ with the metric induced by $g$ on $G$ and suppose that $N/K$ has the
unique left-invariant metric $h$ such that the homogeneous projection
$$\pi:(N,g)\to (N/K,h)$$ with $\pi(n)=n K$ is a Riemannian submersion.
Define the map $\Psi:G\to N$ by $\Psi:na\mapsto n$ and let
$\Phi:G\to N/K$ be the composition $\Phi=\pi\circ\Psi$.
We will show that $\Phi$ is a harmonic morphism.

For a point $nK\in N/K$ we have $\Phi^{-1}(nK)=nKA=nAK$.
The tangent space of $G$ at $e$ is the Lie algebra $\g$ and
the vertical and horizontal spaces of $\Phi$ at $e\in G$ are given by
$$\V_{e}=\a+\k\ \ \text{and}\ \ \H_{e}=\m.$$
At a generic point $na\in G$ the vertical and horizontal spaces
of $\Phi$ are given by the left translate by $dL_{na}$ i.e.
$$\V_{na}=(dL_{na})_{e}(\a+\k)\ \
\text{and}\ \ \H_{na}=(dL_{na})_{e}(\m).$$
\begin{eqnarray*}
d\Phi_{na}((dL_{na})_{e}(Z))
&=&\frac d{dt}\Big[\Phi(L_{na}(\exp(tZ)))\Big]_{t=0}\\
&=&\frac d{dt}\Big[\Phi(n\exp(t\sigma(a)(Z)a))\Big]_{t=0}\\
&=&\frac d{dt}\Big[(nK)(\exp(t\sigma(a))(Z)K)]_{t=0}\\
&=&(dL_{nK})_{eK}(\sigma(a)(Z)),
\end{eqnarray*}
where $\sigma:A\to\text{Aut}(\m)$ is given by $\sigma(a)(Z)=\Ad_a(Z).$
This implies that if $$\rho=d\sigma:\a\to\text{End}(\m)$$ then $\rho(H)$ acts
on $\m$ by the adjoint representation.  According to condition (v)
this is given by
$$\rho(H)(Z)=[H,Z]=\lambda(H)Z+\frac 12(\ad_H-\ad_H^t)Z.$$
The map $\rho$ takes values in $\rn\cdot\text{I}\oplus\so\m$ which is the
Lie algebra of the conformal group on $\m$.  Hence for each
$a\in A$ the map $\sigma(a):\m\to\m$ is conformal.
This implies that for $Z,W\in\m$ we get
\begin{eqnarray*}
& &h_{nK}((dL_{nK})_{eK}(\sigma(a)(Z)),
                (dL_{nK})_{eK}(\sigma(a)(W)))\\
&=&h_{eK}(\sigma(a)(Z),\sigma(a)(W))\\
&=&\mu(a)^2h_{eK}(Z,W),
\end{eqnarray*}
where $\mu:A\to\rn^+$ is the dilation of $\sigma(a)$.  This is clearly
constant along horizontal curves with respect to $\Phi$.  This means that $\Phi$
is horizontally homothetic so for proving that $\Phi$ is a harmonic morphism
it is sufficient to show that it has minimal fibres.

Since $\Phi^{-1}(nK)=nKA$ and $L_{n}$ is an isometry on $G$ it
is enough to show that the fibre $\Phi^{-1}(eK)$ is minimal.
Let $\{H_j\}$ be an orthonormal basis for
$\a$ and $\{X_{i}\}$ an orthonormal basis for $\k$. Let $\mu^\V$ be
the mean curvature vector field along $\Phi^{-1}(eK)$.
For a generic element $Z\in\m$ we have $[H_j,Z]\in \m$ and then
conditions (i), (iii) and (v) give
\begin{eqnarray*}
\ip {\mu^{\V}}Z&=&\sum_{j}{\ip{\nab{H_j}{H_j}}Z}+{\sum_{i}{\ip{\nab{X_{i}}{X_{i}}}Z}}\\
&=&\sum_{j}{\left<[Z,H_j],H_j\right>}+\sum_{i}{\left<[Z,X_i],X_i\right>}\\
&=&\text{trace }\ad_{Z}\\
&=&0.
\end{eqnarray*}
These calculations show that the fibres are minimal and
hence $\Phi$ is a harmonic morphism.
\end{proof}

In order to explain condition (v) in Theorem \ref{theo:morphisms}
we state the following result.

\begin{lemma}\label{lemm:trivial}
Let $V$ be a real vector space equipped with a Euclidean scalar product
$\ip\cdot\cdot$.
Let $L:V\to V$ be a linear operator on $V$.  Then the following conditions
are equivalent.
\begin{enumerate}
\item[(i)] there exist a $\lambda\in\rn$ such that $L=\lambda\cdot I+(L-L^t)/2$,
\item[(ii)] if $Z,W\in V$ such that $|Z|=|W|$ then $\ip {LZ}Z=\ip {LW}W$,
\item[(iii)] if $Z,W\in V$ such that $|Z|=|W|=1$ and $\ip ZW=0$ then
$$\ip{LZ}Z-\ip {LW}W=0\ \ \text{and}\ \ \ip{LZ}W-\ip {LW}Z=0.$$
\end{enumerate}
\end{lemma}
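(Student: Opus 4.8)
The plan is to prove the equivalence of the three conditions in Lemma~\ref{lemm:trivial} by decomposing the operator $L$ into its symmetric and anti-symmetric parts. Write $L = S + A$ where $S = (L+L^t)/2$ is symmetric and $A = (L-L^t)/2$ is anti-symmetric. Since $\ip{AZ}{Z}=0$ for every $Z$ (as $A$ is skew), the quadratic form $Z\mapsto \ip{LZ}{Z}$ depends only on the symmetric part: $\ip{LZ}{Z}=\ip{SZ}{Z}$. Condition (i) says precisely that $S=\lambda\cdot I$, so the whole problem reduces to characterizing when a \emph{symmetric} operator $S$ is a scalar multiple of the identity, in terms of its associated quadratic form. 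This reframing should make all three conditions transparent.

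First I would show (i)$\Rightarrow$(ii) and (i)$\Rightarrow$(iii), which are immediate: if $S=\lambda I$ then $\ip{LZ}{Z}=\lambda|Z|^2$, so equal norms force equal values, giving (ii); and for the second equation in (iii) I note that $\ip{LZ}{W}-\ip{LW}{Z}=\ip{SZ}{W}-\ip{SW}{Z}+\ip{AZ}{W}-\ip{AW}{Z}$, where the symmetric terms cancel by symmetry of $S$ and, when $S=\lambda I$, everything involving $S$ vanishes on orthogonal unit vectors, while the anti-symmetric contribution also cancels since $\ip{AZ}{W}=-\ip{AW}{Z}$ by skewness---wait, more carefully, the cross-term condition is automatically satisfied because $\ip{LZ}{W}-\ip{LW}{Z}=2\ip{AZ}{W}$ need not vanish in general, so I must check that the statement of (iii) concerns $\ip{LZ}{W}-\ip{LW}{Z}=\ip{SZ}{W}-\ip{SW}{Z}$ which is the symmetric difference and indeed vanishes when $S=\lambda I$. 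I would organize the cyclic implications as (i)$\Rightarrow$(ii)$\Rightarrow$(iii)$\Rightarrow$(i), or prove (iii)$\Rightarrow$(i) and (ii)$\Rightarrow$(i) directly against a shared hypothesis.

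The substantive direction is recovering (i) from either (ii) or (iii). For (ii)$\Rightarrow$(i): since $\ip{SZ}{Z}$ depends only on $|Z|$, the quadratic form of $S$ is constant on each sphere, so by polarization (or by diagonalizing $S$ via the spectral theorem) all eigenvalues of $S$ must coincide, forcing $S=\lambda I$. Concretely, applying the hypothesis to any two unit eigenvectors of $S$ shows their eigenvalues agree. For (iii)$\Rightarrow$(i): the first equation $\ip{SZ}{Z}=\ip{SW}{W}$ for orthonormal pairs again equalizes the eigenvalues of $S$ (any two distinct eigenvectors are orthogonal and can be normalized), so $S$ is scalar; one should also verify that the second equation in (iii), $\ip{SZ}{W}=\ip{SW}{Z}$, carries no extra content here since it holds automatically for symmetric $S$, confirming consistency.

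The main obstacle I anticipate is purely bookkeeping: keeping the symmetric/anti-symmetric split straight so that the two equations in (iii) are correctly attributed---the first is a statement about the symmetric part $S$ (diagonal entries of the quadratic form), while the second, written as $\ip{LZ}{W}-\ip{LW}{Z}=0$, is genuinely a statement about $L$ and expands to $2\ip{SZ}{W}=0$ after the anti-symmetric terms are handled, so I must confirm it reads as $\ip{SZ}{W}-\ip{SW}{Z}=0$, which is vacuous for symmetric operators. The cleanest route is to reduce everywhere to $S$ via $\ip{LZ}{Z}=\ip{SZ}{Z}$ at the very start, after which the equivalences become standard facts about when a symmetric form is a multiple of the scalar form, provable either by the spectral theorem or by elementary polarization identities without invoking eigenvalues at all.
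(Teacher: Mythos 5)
Your overall strategy---splitting $L=S+A$ into its symmetric part $S=(L+L^t)/2$ and skew part $A=(L-L^t)/2$, noting $\ip{LZ}Z=\ip{SZ}Z$, and reducing everything to the standard fact that a symmetric operator whose quadratic form is constant on spheres (equivalently, has equal diagonal entries in every orthonormal pair) must be $\lambda I$---is the natural one, and your arguments for (i)$\Rightarrow$(ii), (ii)$\Rightarrow$(i), and the first equation of (iii) are correct. (The paper offers no proof to compare against; it declares the lemma an exercise.) The genuine gap is in your treatment of the second equation of (iii), where your algebra is wrong in a way that conceals a real problem. For orthonormal $Z,W$ one has
$$\ip{LZ}W-\ip{LW}Z=2\ip{AZ}W \qquad\text{and}\qquad \ip{LZ}W+\ip{LW}Z=2\ip{SZ}W,$$
so the \emph{difference} is a statement about the skew part $A$, not about $S$: your claims that it ``expands to $2\ip{SZ}{W}=0$'' and that it equals $\ip{SZ}{W}-\ip{SW}{Z}$ are both false. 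Consequently (i) does \emph{not} imply (iii) as literally stated. Concretely, let $V=\rn^2$ and let $L$ be rotation by $\pi/2$, so $LZ=W$, $LW=-Z$ for an orthonormal basis $\{Z,W\}$. Then $L$ is skew, so (i) holds with $\lambda=0$ and (ii) holds trivially, yet $\ip{LZ}W-\ip{LW}Z=1-(-1)=2\neq 0$. No proof of the lemma as printed can exist.

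What you have actually stumbled on is a sign typo in the paper: the second equation of condition (iii) should read $\ip{LZ}W+\ip{LW}Z=0$, exactly as in condition (v) of Theorem \ref{theo:foliations}, where the plus sign does appear. With that correction your plan goes through verbatim: the plus-combination equals $2\ip{SZ}W$, so (iii) says precisely that $S$ has equal diagonal entries and vanishing off-diagonal entries in every orthonormal pair; either statement (via the spectral theorem applied to an orthonormal eigenbasis of $S$, or by polarization) forces $S=\lambda I$, which is (i), and conversely $S=\lambda I$ makes both equations hold, the skew contributions cancelling in the plus-combination. The correct write-up should therefore state the corrected (iii), record the rotation counterexample showing the minus-sign version fails, and then run your reduction to $S$---rather than the self-contradictory passage (``need not vanish in general \dots\ must confirm it reads as \dots\ vacuous'') that currently papers over the inconsistency.
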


\begin{proof}
The proof is an easy exercise left to the reader.
\end{proof}

\begin{remark}
If the Lie algebra $\k$ is semi-simple then $[\k,\k]=\k$.  This means that
independent of how we pick $\a$ and $\m$ satisfying the conditions in
Theorem \ref{theo:morphisms} the resulting Lie algebra $\g=\a\oplus\k\oplus\m$
will not be solvable.  This gives a wide class of Lie algebras not
covered by Theorem 12.1 of \cite{Gud-Sve-4}.
\end{remark}

\section{The case of $\dim(\a,\k,\m)=(2,0,2)$}

Let $\g$ be a $4$-dimensional Lie algebra equipped with a Euclidean
metric and $\g=\a\oplus\k\oplus\m$ be an orthogonal decomposition
of $\g$ satisfying the conditions of Theorem \ref{theo:morphisms}.
If $\text{dim}(\a,\k,\m)=(2,0,2)$ then there exists an orthonormal
basis $\{A,B,Z,W\}$ for $\g$ such that $A,B\in\a$, $Z,W\in\m$ and
$$\lb AB=aA+bB,$$
$$\lb AZ=\alpha Z+\beta W,\ \ \lb AW=-\beta Z+\alpha W,$$
$$\lb BZ= x Z+y W,\ \ \lb BW=-y Z+x W,$$
The Jacobi identity gives the following conditions on the real coefficients
$$a\alpha+bx=0=a\beta+by,$$
The Lie algebra $\g$ is solvable, the vertical foliation generated by $\a$
is totally geodesic and the horizontal distribution $\m$ is integrable.

\section{The case of $\dim(\a,\k,\m)=(1,1,2)$}

Let $\g$ be a $4$-dimensional Lie algebra equipped with a Euclidean
metric and $\g=\a\oplus\k\oplus\m$ be an orthogonal decomposition
of $\g$ satisfying the conditions of Theorem \ref{theo:morphisms}.
If $\text{dim}(\a,\k,\m)=(1,1,2)$ then there exists an orthonormal basis
$\{A,X,Z,W\}$ for $\g$ such that $A\in\a$, $X\in\k$, $Z,W\in\m$ and
$$\lb AX=\lambda X,$$
$$\lb AZ=\alpha Z+\beta W,\ \ \lb AW=-\beta Z+\alpha W,$$
$$\lb ZW=\theta X.$$
The Jacobi identity gives the following condition on the real
coefficients $$\theta(\lambda-2\alpha)=0.$$
The Lie algebra $\g$ is solvable and the horizontal distribution $\m$
is integrable if and only if $\theta=0$.

\section{The case of $\dim(\a,\k,\m)=(0,2,2)$}

Let $\g$ be a $4$-dimensional Lie algebra equipped with a Euclidean
metric and $\g=\a\oplus\k\oplus\m$ be an orthogonal decomposition
of $\g$ satisfying the conditions of Theorem \ref{theo:morphisms}.
If $\text{dim}(\a,\k,\m)=(0,2,2)$ then there exists an orthonormal basis
$\{X,Y,Z,W\}$ for $\g$ such that $X,Y\in\k$, $Z,W\in\k$ and
$$\lb XY=zX+wY,$$
$$\lb ZX=r X+s Y,\ \ \lb ZY=tX-rY,$$
$$\lb WX=\rho X+\sigma Y,\ \ \lb WY=\tau X-\rho Y,$$
$$\lb ZW=\theta X.$$
The Jacobi identity gives 8 quadratic equations which
can be solved in order to construct a variety of solutions.
The horizontal distribution $\m$ is integrable if and only if $\theta=0$.
The fibres are totally geodesic if and only if $r=s+t=\rho=\sigma+\tau=0$.
We present our examples by simply listing the non-vanishing Lie brackets
in each case.

\begin{example}
$$\lb XY=zX+wY,$$
$$\lb ZX=-\frac{wt}zX-\frac{w^2t}{z^2}Y,\ \ \lb ZY=tX+\frac{wt}zY.$$
$$\lb WX=-\frac{w\tau}zX-\frac{w^2\tau}{z^2}Y,\ \ \lb WY=\tau X+\frac{w\tau}zY.$$
\end{example}

\begin{example}
$$\lb XY=wY,$$
$$\lb ZX=sY,\ \ \lb WX=\sigma Y.$$
\end{example}

\begin{example}
$$\lb ZX=r X+s Y,\ \ \lb ZY=tX-rY,$$
$$\lb WX=\rho X+\frac{s\rho}r Y,\ \ \lb WY=\frac{t\rho}r X-\rho Y,$$
$$\lb ZW=\theta X.$$
\end{example}

\begin{example}
$$\lb ZX=s Y,\ \ \lb ZY=tX,$$
$$\lb WX=\sigma Y,\ \ \lb WY=\frac{t\sigma}s X,$$
$$\lb ZW=\theta X.$$
\end{example}

\begin{example}
$$\lb WX=\rho X+\sigma Y,\ \ \lb WY=\tau X-\rho Y,$$
$$\lb ZW=\theta X.$$
\end{example}

\begin{example}
$$\lb ZY=tX,\ \ \lb WY=\tau X,$$
$$\lb ZW=\theta X.$$
\end{example}

\section{The case of $\dim(\a,\k,\m)=(2,1,2)$}

Let $\g$ be a $5$-dimensional Lie algebra equipped with a Euclidean
metric and $\g=\a\oplus\k\oplus\m$ be an orthogonal decomposition
of $\g$ satisfying the conditions of Theorem \ref{theo:morphisms}.
If $\text{dim}(\a,\k,\m)=(2,1,2)$ then there exists an orthonormal
basis $\{A,B,X,Z,W\}$ for $\g$ such that $A,B\in\a$,
$X\in\k$, $Z,W\in\m$ and
$$\lb AB=aA+bB,$$
$$\lb AX=\lambda X,\ \ \lb BX=\mu X,$$
$$\lb AZ=\alpha Z+\beta W,\ \ \lb AW=-\beta Z+\alpha W,$$
$$\lb BZ= x Z+y W,\ \ \lb BW=-y Z+x W,$$
$$\lb ZW=\theta X.$$
The Jacobi identity gives 5 quadratic conditions for the real coefficients.
$$\theta(\lambda-2\alpha)=0=\theta(\mu-2x),$$
$$a\alpha+bx=0=a\beta+by,$$
$$a\lambda+b\mu=0.$$
By solving these equations, we get the following examples.

\begin{example}
$$\lb AB=aA+bB,$$
$$\lb AX=-\frac{b\mu}a X,\ \ \lb BX=\mu X,$$
$$\lb AZ=-\frac{bx}a Z-\frac{by}a W,\ \ \lb AW=\frac{by}a Z-\frac{bx}a W,$$
$$\lb BZ= x Z+y W,\ \ \lb BW=-y Z+x W.$$
\end{example}

\begin{example}
$$\lb AB=aA+bB,$$
$$\lb AX=-2\frac{bx}a X,\ \ \lb BX=2x X,$$
$$\lb AZ=-\frac{bx}a Z-\frac{by}a W,\ \ \lb AW=\frac{by}a Z-\frac{bx}a W,$$
$$\lb BZ= x Z+y W,\ \ \lb BW=-y Z+x W,$$
$$\lb ZW=\theta X.$$
\end{example}

\begin{example}
$$\lb AX=2\alpha X,\ \ \lb BX=2x X,$$
$$\lb AZ=\alpha Z+\beta W,\ \ \lb AW=-\beta Z+\alpha W,$$
$$\lb BZ= x Z+y W,\ \ \lb BW=-y Z+x W,$$
$$\lb ZW=\theta X.$$
\end{example}

\begin{example}
$$\lb AB=bB,$$
$$\lb AX=2\alpha X,$$
$$\lb AZ=\alpha Z+\beta W,\ \ \lb AW=-\beta Z+\alpha W,$$
$$\lb ZW=\theta X.$$
\end{example}

\begin{example}
$$\lb AX=\lambda X,\ \ \lb BX=\mu X,$$
$$\lb AZ=\alpha Z+\beta W,\ \ \lb AW=-\beta Z+\alpha W,$$
$$\lb BZ= x Z+y W,\ \ \lb BW=-y Z+x W.$$
\end{example}

\begin{example}
$$\lb AB=bB,$$
$$\lb AX=\lambda X,$$
$$\lb AZ=\alpha Z+\beta W,\ \ \lb AW=-\beta Z+\alpha W,$$
\end{example}

\section{The case of $\dim(\a,\k,\m)=(1,2,2)$}

Let $\g$ be a $5$-dimensional Lie algebra equipped with a Euclidean
metric and $\g=\a\oplus\k\oplus\m$ be an orthogonal decomposition
of $\g$ satisfying the conditions of Theorem \ref{theo:morphisms}.
If $\text{dim}(\a,\k,\m)=(1,2,2)$ then there exists an orthonormal
basis $\{A,X,Y,Z,W\}$ for $\g$ such that $A\in\a$,
$X,Y\in\k$ and $Z,W\in\m$ and
$$\lb AX=\gamma X+\delta Y,\ \ \lb AY=cX+dY,$$
$$\lb AZ=\alpha Z+\beta W,\ \ \lb AW=-\beta Z+\alpha W,$$
$$\lb XY=zX+wY,$$
$$\lb ZX=rX+sY,\ \ \lb ZY=tX-rY,$$
$$\lb WX=\rho X+\sigma Y,\ \ \lb WY=\tau X-\rho Y,$$
$$\lb ZW=\theta X.$$
The Jacobi identity gives 20 quadratic equations which
can be solved in order to construct a large variety of solutions.
In this section we focus our attention on cases with non-integrable
horizontal distribution ($\theta\neq 0$) and non-totally geodesic fibres
($r\neq 0$ or $s+t\neq 0$).

\begin{example}
$$\lb AY=dY,$$
$$\lb ZX=rX,\ \ \lb ZY=-rY,$$
$$\lb WX=\rho X,\ \ \lb WY=-\rho Y,$$
$$\lb ZW=\theta X.$$
\end{example}

\begin{example}
$$\lb AX=2\alpha X,\ \ \lb AY=\alpha X+3\alpha Y,$$
$$\lb AZ=\alpha Z,\ \ \lb AW=\alpha W,$$
$$\lb ZX=rX+rY,\ \ \lb ZY=-rX-rY,$$
$$\lb WX=\rho X+\rho Y,\ \ \lb WY=-\rho X-\rho Y,$$
$$\lb ZW=\theta X.$$
\end{example}

\begin{example}
$$\lb AX=2\alpha X,\ \ \lb AY=cX+\alpha Y,$$
$$\lb AZ=\alpha Z,\ \ \lb AW=\alpha W,$$
$$\lb ZY=tX,\ \ \lb WY=\tau X,$$
$$\lb ZW=\theta X.$$
\end{example}

\begin{example}
$$\lb AX=2\alpha X,\ \ \lb AY=3\alpha Y,$$
$$\lb AZ=\alpha Z,\ \ \lb AW=\alpha W,$$
$$\lb ZX=sY,$$
$$\lb WX=\sigma Y,$$
$$\lb ZW=\theta X.$$
\end{example}

\begin{example}
$$\lb AX=2\alpha X,\ \ \lb AY=\frac{\alpha r}sX+3\alpha Y,$$
$$\lb AZ=\alpha Z,\ \ \lb AW=\alpha W,$$
$$\lb ZX=rX+sY,\ \ \lb ZY=-\frac{r^2}sX-rY,$$
$$\lb ZW=\theta X.$$
\end{example}

\section{The case of $\dim(\a,\k,\m)=(1,n,2)$}

\begin{example}
Let $\g$ be an $(n+3)$-dimensional Lie algebra equipped with a Euclidean
metric and $\g=\a\oplus\k\oplus\m$ be an orthogonal decomposition
of $\g$.  Let $\{A,X_1,\dots ,X_n,Z,W\}$ be an orthonormal basis
for $\g$ such that $A\in\a$, $X_1,\dots,X_n\in\k$, $Z,W\in\m$ and
\begin{eqnarray*}
\lb A{X_k} &=&c_{k1}X_1+\dots +c_{kn}X_n\\
\lb AZ &=& \alpha Z+\beta W\\
\lb AW &=&-\beta  Z+\alpha W.
\end{eqnarray*}
The Jacobi identity is satisfied for any choice of real numbers
$\alpha$, $\beta$ and $c_{kj}$. The Lie algebra $\g$ is solvable and
satisfies the conditions in Theorem \ref{theo:morphisms}.

If $n\geq 2$, $c_{kj}=0$ for $k<j$ and for some $r<s$ we have
$c_{rr}=c_{ss}$ and $c_{sr}\neq 0$ or if $\beta\neq 0$, then
$\ad_{A}$ is not diagonalizable and we get new examples of
harmonic morphisms not covered by Theorem 12.1 of \cite{Gud-Sve-4}.
\end{example}

\begin{example}
Let $\g$ be an $(n+3)$-dimensional Lie algebra equipped with a Euclidean
metric and $\g=\a\oplus\k\oplus\m$ be an orthogonal decomposition
of $\g$.  Let $\{A,X_1,\dots ,X_n,Z,W\}$ be an orthonormal basis
for $\g$ such that $A\in\a$, $X_1,\dots,X_n\in\k$ and $Z,W\in\m$ and
\begin{eqnarray*}
\lb A{X_k} &=&X_k\\
\lb AZ &=& \alpha Z+\beta W\\
\lb AW &=&-\beta  Z+\alpha W\\
\lb ZW &=& X_1.
\end{eqnarray*}
Then the Jacobi identity implies that $\alpha=1/2$.
The Lie algebra $\g$ is solvable and satisfies the conditions in
Theorem \ref{theo:morphisms}.
The horizontal distribution $\m$ is not integrable.
\end{example}

\section{Homogeneous Hadamard manifolds}

Homogeneous Hadamard manifolds are simply connected Riemannian
homogeneous spaces with non-positive sectional curvature.
Every such manifold $(M^m,g)$ is diffeomorphic to the standard $\rn^m$
and it is actually isometric to a solvable Riemannian Lie group,
see \cite{Wol}, \cite{Hei} and \cite{Aze-Wil-1}.  Important examples are the
irreducible Riemannian symmetric spaces of non-compact type
and the Damek-Ricci spaces, see \cite{Ber-Tri-Van}.

If $S$ is a Riemannian Lie group with Lie algebra $\s$, of non-positive
curvature, then there exists an orthogonal decomposition $\s=\a\oplus\n$ of $\s$,
where $\n=[\s,\s]$.  The subalgebra $\n$ is nilpotent and $\a$ is Abelian.
It follows from the Jacobi identity that if $H_1,H_2\in\a$ and $X\in\n$ then
$$(\ad_{H_1}\circ\ad_{H_2}-\ad_{H_2}\circ\ad_{H_1})(X)=0$$
i.e. the adjoint action of $\a$ on $\n$ is Abelian.
This implies that the complexification  $\n^\cn$ of $\n$ is a direct sum
$$\n^\cn=\bigoplus_\lambda\n^\cn_\lambda$$ of the non-trivial root spaces
$$\n^\cn_\lambda=\{X\in\n^\cn|\ (\ad_H-\lambda(H))^kX=0
\ \ \text{for some $k>0$ and all $H\in\a$}\},$$
where $\lambda\in(\a^\cn)^*$ are the corresponding roots.
For a root $\lambda=\alpha\pm i\beta\in(\a^\cn)^*$ the
generalized root space $\n_{\alpha,\beta}$ is given by
$$\n_{\alpha,\beta}=\n\cap(\n^\cn_{\lambda}\oplus\n^\cn_{\bar\lambda}).$$
Then the vector space $\n$ is the direct sum of generalized root spaces
$$\n=\bigoplus_{\lambda=\alpha\pm i\beta}\n_{\alpha,\beta}.$$

\begin{definition}
Let $V$ be a complex vector space equipped with a Hermitian scalar
product and $L:V\to V$ be a linear operator on $V$.  Further define
$N(L):V\to V$ by $$4N(L)=(L+L^*)^2+[L+L^*,L-L^*].$$
Then the operator $L$ is said to be
\begin {enumerate}
\item[(i)] {\it normal} if $L L^*=L^* L$, and
\item[(ii)] {\it almost normal} if $N(L)$ is semi-positive definite
i.e. for all $Z\in V$ $$\ip {N(L)(Z)}Z\ge 0.$$
\end{enumerate}
\end{definition}

For homogeneous Hadamard manifolds, Azencott and Wilson have in
\cite{Aze-Wil-1} shown that the complex linear extension
 $\ad_H|_{\n^\cn}:\n^\cn\to\n^\cn$ of $\ad_H|_\n$
is almost normal for all $H\in\a$.  It is easily seen that any normal
operator $L$ is also almost normal.  We shall now focus our attention on
the special case of normality.

\begin{lemma}\label{lemm:normal-root}
Let $\n_{\alpha,\beta}$ be a generalized root space of $\n$ such
that the operator $\ad_{H}|_{\n_{\alpha,\beta}}$
is normal for all $H\in\a$. Then
$$\ad_{H}|_{\n_{\alpha,\beta}}=\alpha(H)I_{\n_{\alpha,\beta}}
+\frac 12(\ad_{H}-\ad_{H}^*)|_{\n_{\alpha,\beta}}$$
\end{lemma}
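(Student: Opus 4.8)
The plan is to reduce the statement to a single fact about normal operators: the self-adjoint part of such an operator is the diagonal operator determined by the real parts of its eigenvalues. Fix $H\in\a$ and abbreviate $L=\ad_H|_{\n_{\alpha,\beta}}$. The asserted identity is equivalent to $\tfrac12(L+L^*)=\alpha(H)\,I$, since subtracting $\tfrac12(L-L^*)$ from $L$ leaves precisely $\tfrac12(L+L^*)$. Thus it suffices to show that the self-adjoint part of $L$ equals $\alpha(H)\,I$.

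To get at the spectrum I would pass to the complexification $V=\n^\cn_\lambda\oplus\n^\cn_{\bar\lambda}$, which is the complexification of $\n_{\alpha,\beta}$ and carries the Hermitian extension of the scalar product. Under this extension the transpose of the real operator $L$ becomes the Hermitian adjoint $L^*$, so $L$ remains normal on $V$ and is therefore unitarily diagonalizable. For a normal operator every eigenvector $v$ with $Lv=\mu v$ satisfies $L^*v=\bar\mu v$, and hence $\tfrac12(L+L^*)v=\operatorname{Re}(\mu)\,v$.

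It remains to identify the eigenvalues $\mu$. By the definition of the root spaces, $\ad_H$ acts on $\n^\cn_\lambda$ with the single generalized eigenvalue $\lambda(H)=\alpha(H)+i\beta(H)$ and on $\n^\cn_{\bar\lambda}$ with $\bar\lambda(H)=\alpha(H)-i\beta(H)$. Normality upgrades these generalized eigenspaces to genuine eigenspaces, so every eigenvalue of $L$ is either $\lambda(H)$ or $\bar\lambda(H)$, and in both cases $\operatorname{Re}(\mu)=\alpha(H)$. Consequently $\tfrac12(L+L^*)$ acts as multiplication by $\alpha(H)$ on each vector of a unitary eigenbasis, so $\tfrac12(L+L^*)=\alpha(H)\,I$. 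Restricting back to the real space $\n_{\alpha,\beta}$, where $L$, $L^*$ and $I$ are all real, yields the stated identity.

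I expect the only point requiring care to be the passage between the real operator on $\n_{\alpha,\beta}$ and its complex-linear extension on $V$: one must confirm that the real metric, its Hermitian extension, and the notions of transpose versus adjoint are compatible, so that normality and the final identity transfer faithfully in both directions. Once that bookkeeping is settled, the argument is just the spectral theorem for normal operators combined with the defining property of the generalized root space $\n_{\alpha,\beta}$.
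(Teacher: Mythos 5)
Your proof is correct and follows essentially the same route as the paper's: both pass to the complexification $\n^\cn_\lambda\oplus\n^\cn_{\bar\lambda}$, use normality to diagonalize $\ad_H$ there with eigenvalues $\lambda(H)$ and $\bar\lambda(H)$, and conclude that $\ad_H+\ad_H^*$ acts as $2\alpha(H)\,I$, which is equivalent to the stated identity. The only cosmetic difference is that you invoke the standard spectral fact that $L^*v=\bar\mu v$ for an eigenvector $v$ of a normal operator $L$, whereas the paper verifies this by an explicit expansion of $\ad_H^*$ in an orthonormal eigenbasis.
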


\begin{proof}
The complex linear extension
$$\ad_H|_{\n^\cn_\lambda\oplus\n^\cn_{\bar\lambda}}
:\n^\cn_\lambda\oplus\n^\cn_{\bar\lambda}
\to\n^\cn_\lambda\oplus\n^\cn_{\bar\lambda}$$
of $\ad_H|_{\n_{\alpha,\beta}}$ is normal and hence diagonalizable over $\cn$.  Let
$\{X_{1},\ldots,X_{n}\}$ and $\{Y_{1},\ldots,Y_{n}\}$ be orthonormal
basis of eigenvectors for $\n^\cn_{\lambda}$ and
$\n^\cn_{\bar\lambda}$, respectively.  For $X\in\n_{\lambda}$ we then get
\begin{eqnarray*}
\ad_{H}^{*}(X)&=&\sum_{k=1}^{n}(\left<\ad_{H}^{*}(X),X_{k}\right>X_k
+\left<\ad_{H}^{*}(X),Y_{k}\right>Y_{k})\\
&=&\sum_{k=1}^{n}(\left<X,\ad_{H}(X_{k})\right>X_{k}
+\left<X,\ad_{H}(Y_{k})\right>Y_{k})\\
&=&\sum_{k=1}^{n}\left<X,\lambda(H)X_{k}\right>X_{k}\\
&=&\sum_{k=1}^{n}{\bar\lambda(H)}\left<X,X_{k}\right>X_{k}\\
&=&{\bar\lambda(H)}X.
\end{eqnarray*}
Similarily, we see that if $Y\in\n_{\bar{\lambda}}$ then
$\ad_{H}^{*}(Y)=\lambda(H)Y$ and hence
$$(\ad_H+\ad^*_H)(Z)=(\lambda(H)+\bar\lambda(H))Z=2\alpha(H)Z$$
for all $Z\in\n_{\lambda}\oplus\n_{\bar{\lambda}}$.
\end{proof}

We shall now apply our general result of
Theorem \ref{theo:morphisms} to construct harmonic morphisms
from an important class of homogeneous Hadamard manifolds.

\begin{theorem}\label{theo:Hadamard}
Let the solvable Riemannian  Lie group $S$ be a homogeneous Hadamard
manifold with Lie algebra $\s=\a+\n$.  Furthermore assume that there exists
a generalized root space $\n_{\alpha,\beta}$ of $\n$ of dimension
$m\geq 2$ such that
\begin{enumerate}
\item[(i)] $\ad_{H}|_{\n_{\alpha,\beta}}$ is normal for all $H\in\a$,
\item[(ii)] $[\n,\n]$ is contained in the orthogonal complement
 $\n_{\alpha,\beta}^\perp$ of $\n_{\alpha,\beta}$ in $\n$,
\item[(iii)] $\ad_{H}(\n_{\alpha,\beta}^{\perp})
\subset\n_{\alpha,\beta}^{\perp}$ for all $H\in\a$.
\end{enumerate}
Then there exists a harmonic morphism $\Phi:S\to\rn^m$.
\end{theorem}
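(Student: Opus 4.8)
The plan is to realize the hypotheses of Theorem \ref{theo:morphisms} for a suitable orthogonal splitting of $\s$ and then quote that theorem directly. The natural choice is to set $\m=\n_{\alpha,\beta}$ and $\k=\n_{\alpha,\beta}^\perp$, the orthogonal complement of $\n_{\alpha,\beta}$ inside $\n$, keeping $\a$ as is; this gives an orthogonal decomposition $\s=\a\oplus\k\oplus\m$ with $\n=\k\oplus\m$ and $\dim\m=m\ge 2$. First I would settle the group-theoretic framework. Since $S$ is simply connected and solvable, $N=\exp\n$ is its nilpotent normal factor and $A=\exp\a$ the Abelian complement, so $S=N\rtimes A$ with both $A$ and $N$ connected and simply connected. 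By hypothesis (ii) one has $[\k,\k]\subset[\n,\n]\subset\k$, so $\k$ is a subalgebra; together with hypothesis (iii) this makes $\k$ an ideal, whence $K=\exp\k$ is a closed simply connected subgroup of $N$.

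The core of the proof is to check conditions (i)--(v) of Theorem \ref{theo:morphisms}. Conditions (i) and (iii) are immediate: $[\a,\k]\subset\k$ is exactly hypothesis (iii), and $[\k\oplus\m,\k\oplus\m]=[\n,\n]\subset\k$ is hypothesis (ii). Condition (ii), $[\a,\m]\subset\m$, is not assumed but follows from the structure theory recalled above: the adjoint action of $\a$ on $\n$ is Abelian, so the commuting operators $\ad_H$ preserve every generalized root space $\n^\cn_\lambda$, and being real they preserve $\n^\cn_\lambda\oplus\n^\cn_{\bar\lambda}$ and hence its real part $\n_{\alpha,\beta}=\m$. For condition (iv) I would invoke nilpotency of $\n$: for $Z\in\m\subset\n$ the endomorphism $\ad_Z$ of $\s$ sends $\a$ into $\n$ and $\n$ into $\n$, so relative to $\s=\a\oplus\n$ its diagonal consists of a zero $\a$-block and the nilpotent block $\ad_Z|_\n$, giving $\trace\ad_Z=0$.

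The decisive condition is (v), and this is exactly what Lemma \ref{lemm:normal-root} is designed to deliver. Hypothesis (i) says $\ad_H|_{\n_{\alpha,\beta}}$ is normal for all $H\in\a$, so the Lemma yields
$$\ad_H|_{\n_{\alpha,\beta}}=\alpha(H)\,I_{\n_{\alpha,\beta}}+\frac12(\ad_H-\ad_H^*)|_{\n_{\alpha,\beta}},$$
equivalently $(\ad_H+\ad_H^*)|_{\n_{\alpha,\beta}}=2\alpha(H)\,I$. The one point needing attention is that the Lemma is phrased with the Hermitian adjoint $\ad_H^*$ on the complexified root space, whereas condition (v) requires the real transpose $\ad_H^t$ on $\m$; since $\ad_H|_\m$ is a real operator whose transpose with respect to the induced Euclidean metric complexifies to the Hermitian adjoint of its complex-linear extension, the identity descends to $(\ad_H+\ad_H^t)(Z)=2\alpha(H)\,Z$ for all $Z\in\m$. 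Setting $\lambda=\alpha\in\a^*$ then verifies (v).

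Having confirmed all five conditions, Theorem \ref{theo:morphisms} furnishes a harmonic morphism $\Phi:S\to\rn^m$, as required. I expect the only genuinely delicate step to be the real-versus-complex bookkeeping in condition (v); everything else follows either verbatim from the hypotheses or from the general root-space and nilpotency structure of the homogeneous Hadamard manifold.
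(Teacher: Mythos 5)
Your proposal is correct and follows exactly the route of the paper's own (very terse) proof: take $\m=\n_{\alpha,\beta}$, $\k=\n_{\alpha,\beta}^\perp$, and verify the hypotheses of Theorem \ref{theo:morphisms} using Lemma \ref{lemm:normal-root} for condition (v) and nilpotency of $\n$ for condition (iv). The paper merely asserts that this check is ``easy''; your write-up supplies the details it omits, including the correct handling of the real transpose versus the Hermitian adjoint.
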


\begin{proof}
In this situation we have an orthogonal decomposition
$\s=\a\oplus\k\oplus\m$ of the Lie algebra $\s$ where
$\m=\n_{\alpha,\beta}$ and $\k=\n_{\alpha,\beta}^\perp$.
It is now easy to check, using Lemma \ref{lemm:normal-root}
and the fact that $\n$ is nilpotent, that the conditions
of Theorem \ref{theo:morphisms} are satisfied.
\end{proof}

\begin{example}
The Carnot spaces form an interesting family of
homogeneous Hadamard manifolds, see \cite{Pan}.
Such a space $(S,g)$ is a solvable Riemannian Lie group
with Lie algebra $\s$ and an orthogonal decomposition
$$\s=\a\oplus\bigoplus_{r=1}^k\n_r$$
of $\s$ such that $\a$ is a one dimensional subalgebra and the
adjoint action of $\a$ satisfies
$$\ad_H(X_r)=[H,X_r]=r\cdot X_r,$$
for all $H\in\a$ and $X_r\in\n_r$.  It is an immediate consequence
of the Jacobi identity that
$$[\n_r,\n_s]\subset\n_{r+s},$$
where $\n_{r+s}=0$ if $k\le r+s$.

Let us assume that the dimension of $\n_1$ is at least $2$
and put $\m=\n_1$ and $\k=\n_2\oplus\cdots\oplus\n_k$.   Then
we have an orthogonal decomposition $\s=\a\oplus\k\oplus\m$
satisfying the conditions of Theorem \ref{theo:morphisms}.
\end{example}

\section{Examples}

We have constructed many new examples of
complex-valued harmonic morphisms from Riemannian Lie groups.
It turns out that several of these spaces are actually homogeneous
Hadamard manifolds.  This can be checked by calculating their
sectional curvatures.  For this we have used a Maple programme
written by Valentine Svensson, see \cite{Val}.
Here we list some of these examples and conditions for the real
parameters, ensuring non-positive curvature in each case.
\vskip .4cm

\noindent
Example 7.1: $a^3<b^2\mu$,
             $a^3<b^2x$,
             $b^2<a\mu$,
             $b^2<ax$ and
             $0<a,\mu,x$.
\vskip .5cm

\noindent
Example 7.2: $\theta^2a^2<8x^2(a^2+b^2)$,
             $a^3<b^2x$, $b^2<ax$ and
             $0<a,x$.
\vskip .5cm

\noindent
Example 7.3: $\theta^2<8(x^2+a^2)$.
\vskip .5cm

\noindent
Example 7.4: $0<\alpha b$ and
             $\theta^2<8\alpha^2$.
\vskip .5cm

\noindent
Example 7.5: $0<\mu x+\lambda\alpha$.
\vskip .5cm

\noindent
Example 7.6: $0<\alpha b$,
             $0<\alpha\lambda$ and
             $0<b\lambda$.
\vskip .5cm

\noindent
Example 8.2: $4(\rho^2+r^2)<23\alpha^2$,
             $\theta^2<8\alpha^2+4r^2$ and
             $\theta^2<8\alpha^2+4\rho^2$.
\vskip .5cm

\noindent
Example 8.3: $c^2<16\alpha^2$,
             $\theta^2+\tau^2<8\alpha^2$,
             $\theta^2+t^2<8\alpha^2´$ and
             $t^2+\tau^2+c^2<8\alpha^2$.
\vskip .5cm

\noindent
Example 8.4: $s^2<12\alpha^2$,
             $\sigma^2<12\alpha^2$,
             $\theta^2<3s^2+8\alpha^2$ and
             $\theta^2<3\sigma^2+8\alpha^2$.
\vskip .5cm

\noindent
Example 8.5: $r^2<16s^2$,
             $\theta^2<8\alpha^2$,
             $2r^2s^2+r^4+\alpha^2r^2+s^4<24\alpha^2s^2$,
             $\theta^2s^2+r^4<3s^4+2r^2s^2+8\alpha^2s^2$ and
             $s^4<12\alpha^2s^2+2r^2s^2+3r^4$.
\vskip .5cm

\noindent
Example 9.2: no conditions are needed.
\vskip .5cm

\section{Conformal foliations on Lie groups}

In this section we present a new method for constructing
left-invariant foliations on a wide class of Lie groups producing
harmonic morphisms.  For the theory of foliations producing
harmonic morphisms we recommend the standard reference
\cite{Bai-Woo-book}.

\begin{theorem}\label{theo:foliations}
Let $G$ be a Lie group equipped with a left-invariant Riemannian metric.
Let $\g=\a\oplus\k\oplus\m$ be an orthogonal decomposition of the Lie
algebra $\g$ of $G$ such that
\begin{enumerate}
\item[(i)] $\a\oplus\k$ is a subalgebra of $\g$,
\item[(ii)] $[\a,\m]\subset\k\oplus\m$,
\item[(iii)] $[\k\oplus\m,\k\oplus\m]\subset\k$.
\end{enumerate}
Furthermore we assume that if $H\in\a$ and $Z,W\in\m$ such that $|Z|=|W|=1$
and $\ip ZW=0$, then
\begin{enumerate}
\item[(iv)] $\ip{\ad_HZ}Z-\ip {\ad_HW}W=0$,
\item[(v)] $\ip{\ad_HZ}W+\ip {\ad_HW}Z=0$,
\item[(vi)] $\text{\rm trace }\ad_Z=0$.
\end{enumerate}
Under the above conditions the distribution $\V=\a\oplus\k$
is integrable and the corresponding foliation produces
harmonic morphisms.
\end{theorem}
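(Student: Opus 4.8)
The plan is to verify that the foliation tangent to $\V=\a\oplus\k$ is a \emph{conformal foliation with minimal leaves} and then invoke the general theory relating such foliations to harmonic morphisms developed in \cite{Bai-Woo-book}. The integrability of $\V$ is immediate: by condition (i) the subspace $\a\oplus\k$ is a subalgebra of $\g$, so the corresponding left-invariant distribution is involutive and Frobenius' theorem produces the foliation. The horizontal distribution is the left-translate of $\H_e=\m$. It then remains to establish horizontal conformality and minimality of the leaves, both of which reduce, by left-invariance, to purely algebraic statements at the identity.

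For conformality I would compute the Lie derivative $\mathcal{L}_V g$ of the metric along a vertical field in horizontal directions. Taking $V\in\a\oplus\k$ and $Z,W\in\m$ to be left-invariant, left-invariance of $g$ gives $V\ip ZW=0$, so that $(\mathcal{L}_V g)(Z,W)=-\ip{\lb VZ}W-\ip{\lb VW}Z$. Writing $V=H+X$ with $H\in\a$ and $X\in\k$, condition (iii) forces $\lb XZ,\lb XW\in\k$, which is orthogonal to $\m$, so the $\k$-part contributes nothing. For the $\a$-part, condition (ii) makes the operator $L_H\colon Z\mapsto(\ad_H Z)^{\m}$ (the $\m$-component of $\ad_H|_\m$) well defined, and conditions (iv) and (v) say precisely, via Lemma \ref{lemm:trivial}, that the symmetric part of $L_H$ is the scalar $\mu(H)$, i.e. $\ip{\lb HZ}W+\ip{\lb HW}Z=2\mu(H)\ip ZW$. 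Hence $(\mathcal{L}_V g)(Z,W)=-2\mu(H)\ip ZW$ is pure trace, the foliation is conformal, and the dilation is governed by the linear functional $\mu\in\a^*$.

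For minimality I would adapt the computation in the proof of Theorem \ref{theo:morphisms}. Using orthonormal bases $\{H_j\}$ of $\a$ and $\{X_i\}$ of $\k$ together with the Koszul formula for a left-invariant metric, the mean curvature $\mu^{\V}$ of a leaf satisfies $\ip{\mu^{\V}}Z=\sum_j\ip{\lb Z{H_j}}{H_j}+\sum_i\ip{\lb Z{X_i}}{X_i}$ for $Z\in\m$. This is the sum of the $\a$- and $\k$-diagonal entries of $\ad_Z$. The remaining $\m$-diagonal entries $\ip{\lb Z{Z_k}}{Z_k}$ all vanish, because $\lb Z{Z_k}\in\lb\m\m\subset\k$ by condition (iii) is orthogonal to $\m$; hence $\ip{\mu^{\V}}Z$ equals the full $\trace\ad_Z$, which is zero by condition (vi). Thus the leaves are minimal.

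With conformality and minimality established, the foliation produces harmonic morphisms by the theory of \cite{Bai-Woo-book}: locally the leaf space is a manifold of dimension $\dim\m$, the projection onto it is horizontally conformal with the leaves as fibres, and Theorem \ref{theo:B-E} upgrades it to a harmonic morphism. I expect the main obstacle to be the step beyond codimension two: when $\dim\m\geq 3$, Theorem \ref{theo:B-E}(ii) additionally demands horizontal homotheticity. This should again follow from left-invariance, since the conformality factor is controlled by the fixed functional $\mu\in\a^*$ and is therefore constant along horizontal curves, exactly as in the proof of Theorem \ref{theo:morphisms}; carefully justifying that the dilation of the local leaf-space projection is basic is the delicate point of the argument.
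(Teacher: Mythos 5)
Your treatment of integrability, conformality and leaf minimality is correct and, up to notation, identical to the paper's: your Lie-derivative identity $(\mathcal{L}_Vg)(Z,W)=-\ip{[V,Z]}W-\ip{[V,W]}Z$ combined with conditions (iii)--(v) is exactly the paper's computation of $B^{\H}$, and your minimality computation is its computation of $\trace B^{\V}$; the case $\dim\m=2$ via Corollary 4.7.7 of \cite{Bai-Woo-book} also agrees. The genuine gap is precisely the step you postpone to your last paragraph. For $n=\dim\m\neq 2$ you must actually exhibit, near each point, a local submersion onto the leaf space which is horizontally homothetic, and you give no construction; you only assert that it ``should follow from left-invariance, exactly as in the proof of Theorem \ref{theo:morphisms}''. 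That analogy does not transfer. In Theorem \ref{theo:morphisms} horizontal homothety came from the global splitting $G=N\rtimes A$: there was a globally defined map $\Phi=\pi\circ\Psi$, its dilation at $na$ was the explicit function $\mu(a)$ of the $A$-coordinate, and horizontal curves preserve that coordinate. Under hypotheses (i)--(iii) of Theorem \ref{theo:foliations} none of this structure exists: $\k$ need not be an ideal of $\g$, $\a$ need not even be a subalgebra (only $[\a,\a]\subset\a\oplus\k$ is known), and the foliation is produced by Frobenius, with no canonical local submersion and hence no canonical dilation. What left-invariance and (iv)--(v) control is the \emph{vertical} variation of the transverse metric (it rescales at rate $2\mu(H)$ along the flow of $V=H+X$); horizontal homothety concerns the \emph{horizontal} derivative of the dilation of a submersion still to be chosen, and whether some choice makes it vanish is an integrability problem, not a symmetry statement. (Your phrase ``the dilation is basic'' is also the wrong target: basic means constant along the leaves, whereas horizontal homothety means the gradient of the log-dilation is vertical; in Theorem \ref{theo:morphisms} the dilation varies along the fibres.)

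Concretely, the missing step amounts to showing that the left-invariant $1$-form which equals $\mu$ on $\a$ and vanishes on $\k\oplus\m$ is \emph{closed}: only then does a local potential for it provide the conformal rescaling of the leaf-space metric that makes the local projections horizontally homothetic. This is exactly how the paper argues: it invokes the codimension $n\neq 2$ criterion of \cite{Bai-Woo-book}, by which the conformal foliation with minimal leaves produces harmonic morphisms once the $1$-form
$$\omega(E)=\frac{n-2}n\sum_k\ip{\V\nab{Z_k}{Z_k}}E
-\sum_r\ip{\H\nab{A_r}{A_r}}E-\sum_s\ip{\H\nab{X_s}{X_s}}E$$
is closed, and then verifies $d\omega=0$ by computing that $\omega$ vanishes on $\k$ and on $\m$ (conditions (iii) and (vi)), takes constant values on left-invariant fields, so that $d\omega(E,F)=-\omega([E,F])$, and finally that $\omega$ annihilates $[\g,\g]$. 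Note that this last vanishing is a genuine algebraic fact requiring the hypotheses and the Jacobi identity; it is not a formal consequence of left-invariance, which by itself only yields the formula $d\omega(E,F)=-\omega([E,F])$. Without this closedness argument (or an equivalent one) your proposal proves the theorem only when $\dim\m=2$.
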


\begin{proof}
For the integrability of the distribution $\V$ is trivial.
The minimality of the leaves of the foliation $\V$,
is an immediate consequence of the following calculations,
implied by conditions (ii), (iii) and (vi).
\begin{eqnarray*}
\text{trace}B^\V&=&\sum_k\bigl(\sum_r(\ip{[Z_k,A_r]}{A_r})
+\sum_s(\ip{[Z_k,X_s]}{X_s})\bigr)Z_k\\
&=&\sum_k(\text{trace }\ad_{Z_k}) Z_k\\
&=&0.
\end{eqnarray*}

For the conformality of the foliation $\V$, we see that
conditions (iii), (iv) and (v) obtain
\begin{eqnarray*}
B^\H(Z,Z)-B^\H(W,W)
&=&\sum_r(\ip{[A_r,Z]}Z-\ip{[A_r,W]}W)A_r\\
&=&0
\end{eqnarray*}
and
\begin{eqnarray*}
B^\H(Z,W)
&=&\sum_r(\ip{[A_r,Z]}Y+\ip{[A_r,W]}X)A_r\\
&=&0.
\end{eqnarray*}

It follows from Corollary 4.7.7 of \cite{Bai-Woo-book} that if the dimension
of $\m$ is $2$ then we are done.  If that is not the case let $\omega$ be
the real-valued $1$-form on $G$ given by
$$\omega(E)=\frac{n-2}n\sum_k\ip{\V\nab{Z_k}{Z_k}}E
-\sum_r\ip{\H\nab{A_r}{A_r}}E-\sum_s\ip{\H\nab{X_s}{X_s}}E.$$
To prove the statement in this case, it is sufficient to show that $d\omega=0$
i.e. the $1$-form $\omega$ is closed.  Let $A\in\a$, $X\in\k$ and $Z\in\m$ then
\begin{eqnarray*}
\omega(A)&=&\frac{n-2}n\sum_k\ip{\V\nab{Z_k}{Z_k}}A\\
&=&\frac{n-2}n\sum_k\ip{[A,Z_k]}{Z_k}.
\end{eqnarray*}
Note that since the vector fields that we have chosen are left-invariant
the function $\omega(A)$ is constant.  Next we see that
\begin{eqnarray*}
\omega(X)&=&\frac{n-2}n\sum_k\ip{\V\nab{Z_k}{Z_k}}X\\
&=&\frac{n-2}n\sum_k\ip{[X,Z_k]}{Z_k}\\
&=&0
\end{eqnarray*}
and similarly
\begin{eqnarray*}
\omega(Z)&=&-\sum_r\ip{\H\nab{A_r}{A_r}}Z-\sum_s\ip{\H\nab{X_s}{X_s}}Z\\
&=&-\sum_r\ip{[Z,A_r]}{A_r}-\sum_s\ip{[Z,X_s]}{X_s}\\
&=&-\text{trace }\ad_Z\\
&=&0.
\end{eqnarray*}
If $E,F\in\g$ are left-invariant then the exterior derivative
$d\omega$ satisfies
$$d\omega(E,F)=E(\omega(F))-F(\omega(E))-\omega([E,F])=-\omega([E,F])=0,$$
since $\omega$ vanishes on $[\g,\g]\subset\k\oplus\m$.
\end{proof}

\section{Acknowledgements}
The authors are grateful to Martin Svensson
for very useful discussions on this work.

\end{document}